\providecommand{\U}[1]{\protect\rule{.1in}{.1in}}
\newtheorem{theorem}{Theorem}
{}
\newtheorem{summary}{Summary}
\newenvironment{proof}[1][Proof]{\textbf{#1.} }{\ \rule{0.5em}{0.5em}}
\begin{document}

\title{On the Bloch eigenvalues and spectrum of the differential operators of odd order}
\author{O. A. Veliev\\{\small Dogus University, Esenkent 34775,\ Istanbul, Turkey.}\\\ {\small e-mail: oveliev@dogus.edu.tr}}
\date{}
\maketitle

\begin{abstract}
In this paper we consider the Bloch eigenvalues and spectrum of the
non-self-adjoint differential operator $L$ generated \ by the differential
expression of odd order $n$ with the periodic PT-symmetric coefficients, where
$n>1.$ We study the localizations of the Bloch eigenvalues and the structure
of the spectrum. Moreover, we find conditions on the norm of the coefficients
under which the spectrum of $L$ coincides with the real line.

Key Words: PT-symmetric coefficients, Bloch eigenvalues, Spectrum.

AMS Mathematics Subject Classification: 34L05, 34L20.

\end{abstract}

\section{Introduction}

Let $L$ be the differential operator generated in the space $L_{2}%
(-\infty,\infty)$ by the differential expression
\begin{equation}
l(y)=(-i)^{n}y^{(n)}(x)+%
{\textstyle\sum\limits_{v=2}^{n}}
(-i)^{n-v}p_{v}(x)y^{(n-v)}(x), \tag{1}%
\end{equation}
where $n$ is an odd integer greater than $1$ and $p_{v}$ for $v=2,3,...n$ are
$1$-periodic PT-symmetric function satisfying $\left(  p_{v}\right)
^{(n-v)}\in L_{2}\left[  0,1\right]  $. It is well-known that (see [4, 5]) the
spectrum $\sigma(L)$ of the operator $L$ is the union of the spectra of the
operators $L_{t}$ for $t\in(-1,1]$ generated in $L_{2}\left[  0,1\right]  $ by
(1) and the boundary conditions
\begin{equation}
y^{(\mathbb{\nu})}\left(  1\right)  =e^{i\pi t}y^{(\mathbb{\nu})}\left(
0\right)  \tag{2}%
\end{equation}
for $\mathbb{\nu}=0,1,...,(n-1).$ The spectra $\sigma(L_{t})$ of the operators
$L_{t}$ consist of the eigenvalues called the Bloch eigenvalues of $L$.

The operators $L$ and $L_{t}$ are denoted by $L(0)$ and $L_{t}(0)$ if
$p_{2},p_{3},...,p_{n},$ are the zero functions. It is clear that $\left(
2\pi k+\pi t\right)  ^{n}$ and $e^{i\pi\left(  2k+t\right)  x}$ for
$k\in\mathbb{Z}$ are respectively the eigenvalues and eigenfunctions of
$L_{t}(0).$ The numbers $\left(  2\pi k+\pi t\right)  ^{n}$ for $k\in
\mathbb{Z}$ are the simple eigenvalues of $L_{t}(0)$ and the set of all Bloch
eigenvalues of $L(0)$ cover only $1$ times the real axis.

In [10] we proved that if the coefficients of (1) are the $m\times m$ matrices
with the PT-symmetric entries and $m$ is an odd number, then $\mathbb{R\subset
}\sigma(L).$\ In this paper we consider the case $m=1$ in detail and prove
that the nonreal part $\sigma(L)\backslash\mathbb{R}$ of $\sigma(L)$ is
contained in the rectangle
\begin{equation}
\left\{  \lambda\in\mathbb{C}:\left\vert \operatorname{Re}\lambda\right\vert
\leq(2\pi N)^{n},\text{ }\left\vert \operatorname{Im}\lambda\right\vert
<\frac{\sqrt{10}}{3}\left(  2N+1\right)  ^{n-3/2}\pi^{n-2}C\right\}  , \tag{3}%
\end{equation}
where $N$ is the smallest integer satisfying $N\geq\pi^{-2}C+1$ and%
\[
C=%
{\textstyle\sum\limits_{v=2}^{n}}
{\textstyle\sum\limits_{s=0}^{n-v}}
\frac{\left(  n-v\right)  !\left\Vert \left(  p_{v}\right)  ^{(s)}\right\Vert
}{s!(n-v-s)!\pi^{v+s-2}}.
\]
Moreover, we prove that if $C\leq\ \pi^{2}2^{-n+1/2}$, then $\sigma
(L)=\mathbb{R}$.

Note that the obtained results and the methods of the investigations for the
odd case $(n=2v+1)$ essentially differ from the results and the methods of the
investigations for the even case $(n=2v)$. The general even case is similar to
the case $n=2$ (Schr\"{o}dinger operator). There are a large number of papers
for the Schr\"{o}dinger operator (see the monographs [1, Chapters 4 and 6] and
[8, Chapters 3 and 5] and the papers they refer to). The results and the
method used in this paper are completely different from the results and
methods of those papers. That is why, and in order not to deviate from the
purpose of this paper, we do not discuss them here, in detail. We only note
that, in the first paper [2] about the PT-symmetric periodic potential, the
disappearance of real energy bands for some complex-valued PT-symmetric
periodic potentials have been reported. In [6] it was showed that the
disappearance of such real energy bands implies the existence of nonreal band
spectra. In [7], I proved that the main part of the spectrum of $L$ is real
and contains the large part of $[0,\infty).$ However, in general, the spectrum
contains also infinitely many nonreal acts.

\section{Main Results}

First we investigate the localizations of the eigenvalues of $L_{t}$. This
investigation is similar to Section 2 of [9], where the self-adjoint case was
investigated. We prove that if $\left\vert k\right\vert \geq\pi^{-2}C+1$, then
the $\delta_{k}(t):=\frac{3}{2}\pi^{n-2}C\left\vert (2k+t)\right\vert ^{n-2}$
neighborhood
\begin{equation}
U(k,t)=\left\{  \lambda\in\mathbb{C}:\left\vert \lambda-(2\pi k+\pi
t)^{n}\right\vert <\delta_{k}(t)\right\}  \tag{4}%
\end{equation}
of the eigenvalue $(2\pi k+\pi t)^{n}$ of $L_{t}(0)$ contains only one
eigenvalue of $L_{t},$ where $C$ is defined in (3). To prove this statement we
use the following formulas. Let $\lambda(k,t,\varepsilon)$ be the eigenvalue
of the operator $L_{t,\varepsilon}:=L_{t}(0)+\varepsilon(L_{t}-L_{t}%
(0))~$satisfying $\operatorname{Re}\left(  \lambda(k,t,\varepsilon)\right)
\in I(k,t)~\ $and $\Psi_{\lambda(k,t,\varepsilon)}$ be a normalized
eigenfunction of $L_{t,}$ corresponding to the eigenvalue $\lambda
(k,t,\varepsilon),$ where $\varepsilon\in\lbrack0,1]$ and
\begin{equation}
I(k,t)=[(2\pi k+\pi t-\pi)^{n},(2\pi k+\pi t+\pi)^{n}). \tag{5}%
\end{equation}
Sometimes, for brevity, instead of $\Psi_{\lambda(k,t,\varepsilon)}$ and
$\lambda(k,t,\varepsilon)$ we write $\Psi_{\lambda}$ and $\lambda.$

Multiplying the equation $L_{t,\varepsilon}\Psi_{\lambda}=\lambda\Psi
_{\lambda}$ by $e^{i\left(  2\pi k+\pi t\right)  x}$ and using the equality
\[
L_{t}(0)e^{i\left(  2\pi k+\pi t\right)  x}=\left(  2\pi k+\pi t\right)
^{n}e^{i\left(  2\pi k+\pi t\right)  x}%
\]
we get
\begin{equation}
\left(  \lambda-\left(  2\pi k+\pi t\right)  ^{n}\right)  \left(
\Psi_{\lambda},e^{i\left(  2\pi k+\pi t\right)  x}\right)  =\varepsilon
\sum\limits_{\nu=2}^{n}(p_{v}\Psi_{\lambda}^{(n-v)},e^{i\left(  2\pi k+\pi
t\right)  x}), \tag{6}%
\end{equation}
where $(\cdot,\cdot)$ is the inner product in $L_{2}\left[  0,1\right]  .$ To
prove that $\lambda(k,t,\varepsilon)\in U(k,t)$ we estimate the right side of
(6) and $\left(  \Psi_{\lambda},e^{i\left(  2\pi k+\pi t\right)  x}\right)  $.
First let us estimate the right hand side of (6). For this we use the
integration by parts formula and obtain
\begin{equation}
(p_{v}\Psi_{\lambda(k,t,\varepsilon)}^{(n-v)},e^{i\left(  2\pi k+\pi t\right)
x})=(\Psi_{\lambda(k,t,\varepsilon)},\left(  \overline{p_{v}}e^{i\left(  2\pi
k+\pi t\right)  x}\right)  ^{(n-v)}). \tag{7}%
\end{equation}
If $k\in\mathbb{Z}\backslash\left\{  0\right\}  ,$ then by direct calculations
one can easily verify that%
\begin{equation}
\left\Vert \left(  \overline{p_{v}}e^{i\left(  2\pi k+\pi t\right)  x}\right)
^{(n-v)}\right\Vert \leq%
{\textstyle\sum\limits_{s=0}^{n-v}}
\frac{\left(  n-v\right)  !\left\vert 2\pi k+\pi t\right\vert ^{n-v-s}%
\left\Vert p_{v}^{(s)}\right\Vert }{s!(n-v-s)!}. \tag{8}%
\end{equation}
Using (7), (8), Schwarz's inequality and equality $\left\Vert \Psi
_{\lambda(k,t,\varepsilon)}\right\Vert =1$ we get%
\[
\left\vert (p_{v}\Psi_{\lambda(k,t,\varepsilon)}^{(n-v)},e^{i\left(  2\pi
k+\pi t\right)  x})\right\vert \leq\sum\limits_{s=0}^{n-v}\frac{\left(
n-v\right)  !\left\vert 2\pi k+\pi t\right\vert ^{n-v-s}\left\Vert p_{v}%
^{(s)}\right\Vert }{s!(n-v-s)!}\leq
\]%
\[
\left\vert 2\pi k+\pi t\right\vert ^{n-2}\sum\limits_{s=0}^{n-v}\frac{\left(
n-v\right)  !\left\Vert \left(  p_{v}\right)  ^{(s)}\right\Vert }%
{s!(n-v-s)!\left\vert 2\pi k+\pi t\right\vert ^{v+s-2}}.
\]
Therefore, if $k\neq0,$ then
\begin{equation}
\left\vert \varepsilon\sum\limits_{\nu=2}^{n}(p_{v}\Psi_{\lambda
(k,t,\varepsilon)}^{(n-v)},e^{i\left(  2\pi k+\pi t\right)  x})\right\vert
\leq\left\vert 2\pi k+\pi t\right\vert ^{n-2}C \tag{9}%
\end{equation}
for all $\varepsilon\in\lbrack0,1]$ and $t\in(-1,1],$ where $C$ is defined in
(3). In case $k=0$ we have
\begin{equation}
\left\vert \varepsilon\sum\limits_{\nu=2}^{n}(p_{v}\Psi_{\lambda
(k,t,\varepsilon)}^{(n-v)},e^{i\left(  \pi t\right)  x})\right\vert \leq
\pi^{n-2}C \tag{10}%
\end{equation}
for all $\varepsilon\in\lbrack0,1]$ and $t\in(-1,1].$

Let us now show that the estimate for $\left(  \Psi_{\lambda},e^{i\left(  2\pi
k+\pi t\right)  x}\right)  $ can be performed by repeating the calculations
performed in [9] for estimate (11) of $\left(  \Psi_{\lambda},\varphi
_{k,t}\right)  .$ We consider the case $k>0.$ The case $k<0$ can be considered
in the same way. It follows from the definition of $\lambda(k,t,\varepsilon)$
and (5) that
\[
\left\vert \lambda(k,t,\varepsilon)-(2\pi p+\pi t)^{n}\right\vert >\left\vert
(2\pi k+\pi t+\pi)^{n}-(2\pi p+\pi t)^{n}\right\vert
\]
for $p>k$ and
\[
\left\vert \lambda(k,t,\varepsilon)-(2\pi p+\pi t)^{n}\right\vert
\geq\left\vert (2\pi k+\pi t-\pi)^{n}-(2\pi p+\pi t)^{n}\right\vert
\]
for $p<k$. Using these inequalities and the relations obtained from (6) and
(9) by replacing $k$ with $p\neq0$ we get
\begin{equation}
\left\vert \left(  \Psi_{\lambda(k,t,\varepsilon)},e^{i\left(  2\pi p+\pi
t\right)  x}\right)  \right\vert ^{2}<\frac{\pi^{-4}C^{2}\left(  \left\vert
2p+t\right\vert ^{n-2}\right)  ^{2}}{\left(  (2k+t+1)^{n}-(2p+t)^{n}\right)
^{2}} \tag{11}%
\end{equation}
for $p>k$ and
\begin{equation}
\left\vert \left(  \Psi_{\lambda(k,t,\varepsilon)},e^{i\left(  2\pi p+\pi
t\right)  x}\right)  \right\vert ^{2}\leq\frac{\pi^{-4}C^{2}\left(  \left\vert
2p+t\right\vert ^{n-2}\right)  ^{2}}{\left(  (2k+t-1)^{n}-(2p+t)^{n}\right)
^{2}} \tag{12}%
\end{equation}
for $p<k.$ In case $p=0,$ instead of (9) using (10) we get the formulas
\begin{equation}
\left\vert \left(  \Psi_{\lambda(k,t,\varepsilon)},e^{i\pi tx}\right)
\right\vert ^{2}<\frac{\pi^{-4}C^{2}}{\left(  (2k+t+1)^{n}-t^{n}\right)  ^{2}}
\tag{13}%
\end{equation}
for $k<0$ and
\begin{equation}
\left\vert \left(  \Psi_{\lambda(k,t,\varepsilon)},e^{i\pi tx}\right)
\right\vert ^{2}\leq\frac{\pi^{-4}C^{2}}{\left(  (2k+t-1)^{n}-t^{n}\right)
^{2}} \tag{14}%
\end{equation}
for $k>0$ instead of (11) and (12). Note that formulas (11)-(14) coincides
with the formulas (18)-(21) of [9] if $C$ and $e^{i\left(  2\pi p+\pi
t\right)  x}$ are replaced by $M$ and $\varphi_{p,t}$. Therefore, instead of
(18)-(21) of [9] using (11)-(14) and repeating the proof of (11) of [9] we
get
\begin{equation}
\left\vert \left(  \Psi_{\lambda(k,t,\varepsilon)},e^{i\left(  2\pi k+\pi
t\right)  x}\right)  \right\vert >\frac{2}{3}. \tag{15}%
\end{equation}
Thus, instead of (11), (8) and (9), (10) of [9] using respectively (15), (6)
and (9), (10) and repeating the proofs of Theorem 1$(a)$ and $(b)$ of [9] we obtain.

\begin{theorem}
Let $N$ be the smallest integer satisfying $N\geq\pi^{-2}C+1$ and
\[
S(N,t)=\left\{  \lambda\in\mathbb{C}:\operatorname{Re}\lambda\in\lbrack(-2\pi
N+\pi+\pi t)^{n},(2\pi N-\pi+\pi t)^{n})\right\}
\]

$(a)$ If $\left\vert k\right\vert \geq N,$ then the eigenvalues of
$L_{t,\varepsilon}$ for $\varepsilon\in\lbrack0,1]$ lying in the strip
\[
P(k,t)=\left\{  \lambda\in\mathbb{C}:\operatorname{Re}\lambda\in
I(k,t)\right\}
\]
is contained in $U(k,t),$ where $U(k,t)$ and $I(k,t)$ are defined in (4) and (5).

$(b)$ The closures of $S(N,t)$ and $U(k,t)$ for $\left\vert k\right\vert \geq
N$ are pairwise disjoint closed sets.
\end{theorem}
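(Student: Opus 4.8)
The plan is to read part $(a)$ directly off the three facts already assembled before the statement, and to treat part $(b)$ as a purely geometric separation estimate for disks centered on the real axis.

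For part $(a)$ I would take any eigenvalue $\lambda$ of $L_{t,\varepsilon}$ with $\operatorname{Re}\lambda\in I(k,t)$; by definition this is one of the numbers $\lambda(k,t,\varepsilon)$, so identity (6) and the bound (15) both apply to it. Since $|k|\geq N\geq 1$ we have $k\neq 0$, so the right-hand side of (6) is controlled by (9). Dividing (6) by $(\Psi_{\lambda},e^{i(2\pi k+\pi t)x})$, taking absolute values, and using $|(\Psi_{\lambda},e^{i(2\pi k+\pi t)x})|>\tfrac23$ from (15) together with (9) gives
\[
\left\vert\lambda-(2\pi k+\pi t)^{n}\right\vert<\frac{3}{2}\left\vert 2\pi k+\pi t\right\vert^{n-2}C=\frac{3}{2}\pi^{n-2}\left\vert 2k+t\right\vert^{n-2}C=\delta_{k}(t),
\]
which is exactly $\lambda\in U(k,t)$. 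Thus $(a)$ requires no new idea beyond combining (6), (9) and (15).

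For part $(b)$ I would first observe that each $U(k,t)$ is a disk whose center $(2\pi k+\pi t)^{n}$ is real, so it suffices to separate the real projections. Writing $b=\left\vert 2\pi k+\pi t\right\vert$ and recalling $\delta_{k}(t)=\tfrac32 C\,b^{n-2}$, the projection of $\overline{U(k,t)}$ is $[(2\pi k+\pi t)^{n}-\delta_{k}(t),\,(2\pi k+\pi t)^{n}+\delta_{k}(t)]$. Since the intervals $I(k,t)$ tile $\mathbb{R}$ and meet only at their endpoints, and since the right edge $(2\pi N-\pi+\pi t)^{n}$ of $S(N,t)$ coincides with the left endpoint of $I(N,t)$ (and symmetrically on the left), the whole claim reduces to showing that this projection lies strictly inside the open interval $((2\pi k+\pi t-\pi)^{n},(2\pi k+\pi t+\pi)^{n})$ for every $|k|\geq N$. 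Because $x\mapsto x^{n}$ is increasing and convex on $(0,\infty)$ the backward gap is the smaller one, so (using that $n$ is odd to fold the case $k\leq -N$ onto $k\geq N$ via $2\pi k+\pi t\mapsto b$) everything comes down to the single inequality
\[
\delta_{k}(t)=\tfrac32 C\,b^{n-2}<b^{n}-(b-\pi)^{n},\qquad b=\pi\left\vert 2k+t\right\vert\geq\pi(2N-1).
\]

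To establish this I would factor out $b^{n-2}$ and use $n\geq 3$ with $0<1-\pi/b<1$ to bound $(1-\pi/b)^{n-2}\leq 1-\pi/b$, which yields
\[
b^{n}-(b-\pi)^{n}=b^{n-2}\left[b^{2}-(b-\pi)^{2}(1-\pi/b)^{n-2}\right]\geq b^{n-3}\bigl(b^{3}-(b-\pi)^{3}\bigr)>3\pi\,b^{n-2}(b-\pi).
\]
It is then enough to check $\tfrac12 C<\pi(b-\pi)$, i.e. $b>\pi+\tfrac{C}{2\pi}$, and the hypothesis $N\geq\pi^{-2}C+1$ gives $b\geq\pi(2N-1)\geq 2\pi^{-1}C+\pi>\pi+\tfrac{C}{2\pi}$, with a comfortable margin; the boundary cases $k=\pm N$, where $\overline{U(k,t)}$ abuts $\overline{S(N,t)}$ rather than a neighboring disk, are covered by the same estimate because the strip's edge is precisely an endpoint of $I(\pm N,t)$. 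I expect the only real obstacle to be bookkeeping: applying the monotonicity step $(1-\pi/b)^{n-2}\leq 1-\pi/b$ in the correct direction, confirming that $n=3$ is the extremal case, and tracking the sign conventions that let the $k\leq -N$ analysis reduce to the $k\geq N$ one.
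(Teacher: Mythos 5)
Your proposal is correct and follows essentially the route the paper intends: part $(a)$ is exactly the combination of identity (6), the bounds (9)--(10), and the estimate (15) that yields $|\lambda-(2\pi k+\pi t)^{n}|<\delta_{k}(t)$, and part $(b)$ is the elementary separation estimate $\delta_{k}(t)<b^{n}-(b-\pi)^{n}$ for $b=\pi|2k+t|\geq\pi(2N-1)\geq\pi+2C/\pi$, which is the content the paper outsources to the proof of Theorem 1 of [9]. Your explicit convexity/factoring argument for $(b)$, including the reduction of $k\leq-N$ to $k\geq N$ via oddness of $n$ and the treatment of the boundary cases $k=\pm N$ against the strip, checks out and supplies details the paper omits.
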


Similarly, repeating the proof of Theorem 2 of [9] we get

\begin{theorem}
If $C\leq\ \pi^{2}2^{-n+1/2}$, then the eigenvalues of $L_{t,\varepsilon}$ for
$\varepsilon\in\lbrack0,1]$ are contained in the disks
\[
U(0,t)=\left\{  \lambda\in\mathbb{C}:\left\vert \lambda-(\pi t)^{n}\right\vert
<\frac{1}{5}\pi^{n}\right\}  ,
\]%
\[
U(1,t)=\left\{  \lambda\in\mathbb{C}:\left\vert \lambda-(2\pi+\pi
t)^{n}\right\vert <\frac{3}{10}\left\vert 2+t\right\vert ^{n-2}\pi
^{n}\right\}  ,
\]%
\[
U(-1,t)=\left\{  \lambda\in\mathbb{C}:\left\vert \lambda-(\pi t-2\pi
)^{n}\right\vert <\frac{3}{10}\left\vert t-2\right\vert ^{n-2}\pi^{n}\right\}
,
\]
and $U(k,t)$ for $\left\vert k\right\vert >1$ which are defined by (4). The
closures of these disks are pairwise disjoint closed sets.
\end{theorem}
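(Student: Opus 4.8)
The plan is to leverage the smallness hypothesis to reduce to finitely many indices and then to sharpen the generic coefficient bound. Since $C\leq\pi^{2}2^{-n+1/2}$ forces $\pi^{-2}C+1<2$, we have $N=2$, so Theorem 1$(a)$ already confines every eigenvalue of $L_{t,\varepsilon}$ whose real part lies in $I(k,t)$ with $|k|\geq 2$ to the disk $U(k,t)$. It therefore remains to locate the eigenvalues whose real parts fall in the central strip $S(2,t)$; these surround the three free eigenvalues $(\pi t)^{n}$ and $(\pm 2\pi+\pi t)^{n}$ corresponding to $k=0,\pm 1$. For each such $k$ I would start from the scalar identity (6), written as $|\lambda-(2\pi k+\pi t)^{n}|=|\varepsilon\sum_{v=2}^{n}(p_{v}\Psi_{\lambda}^{(n-v)},e^{i(2\pi k+\pi t)x})|\,/\,|c_{k}|$ with $c_{k}:=(\Psi_{\lambda},e^{i(2\pi k+\pi t)x})$; bounding the numerator by (9) or (10) reduces the whole problem to a lower bound on the denominator $|c_{k}|$, and all these bounds hold uniformly in $\varepsilon\in[0,1]$, so containment in the disks is immediate once $|c_{k}|$ is controlled.

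For $k=\pm 1$ the generic estimate (15), namely $|c_{k}|>\tfrac{2}{3}$, is already enough: inserting it into (9) together with $C\leq\pi^{2}2^{-n+1/2}$ yields radii bounded by $\tfrac{3}{2}\cdot 2^{-n+1/2}\pi^{n}|2+t|^{n-2}$ and $\tfrac{3}{2}\cdot 2^{-n+1/2}\pi^{n}|t-2|^{n-2}$, and since $\tfrac{3}{2}\cdot 2^{-n+1/2}\leq\tfrac{3}{10}$ for every odd $n\geq 3$ these are dominated by the asserted radii of $U(\pm 1,t)$.

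The main obstacle is the index $k=0$. Here the asserted radius $\tfrac{1}{5}\pi^{n}$ is strictly sharper than what $|c_{0}|>\tfrac{2}{3}$ delivers: the latter gives only $\tfrac{3}{2}\pi^{n-2}C\leq\tfrac{3}{2}\cdot 2^{-n+1/2}\pi^{n}$, which already exceeds $\tfrac{1}{5}\pi^{n}$ at $n=3$. I would therefore sharpen the lower bound on $|c_{0}|$ directly: by Parseval's identity $\sum_{p}|c_{p}|^{2}=1$ one has $|c_{0}|^{2}=1-\sum_{p\neq 0}|c_{p}|^{2}$, and the off-diagonal coefficients are controlled by (11)--(14) with $k=0$. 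Estimating the tail $\sum_{p\neq 0}|c_{p}|^{2}$ and invoking the smallness condition should force $|c_{0}|>5\pi^{-2}C$, which is exactly the inequality turning (10) into the radius $\tfrac{1}{5}\pi^{n}$, since then $|\lambda-(\pi t)^{n}|\leq\pi^{n-2}C/|c_{0}|<\tfrac{1}{5}\pi^{n}$. This is the delicate point because the $p=\pm 1$ contributions to the tail are not uniformly small in $t$ --- near $t=1$ and $t\to -1$ the gaps appearing in the denominators of (12)--(14) shrink --- so the tail must be handled carefully, exactly as in the proof of Theorem 2 of [9]. Finally, for disjointness I would note that $k\mapsto(2\pi k+\pi t)^{n}$ is strictly increasing with gaps growing in $|k|$, while each radius is a fixed fraction of the adjacent gap; comparing each pair of consecutive centers against the sum of their radii --- a short explicit check for the three central disks and a monotone estimate for $U(k,t)$ with $|k|>1$, identical in form to the argument behind Theorem 1$(b)$ --- shows the closures are pairwise disjoint.
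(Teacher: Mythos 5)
Your overall strategy is the same one the paper intends: note that the hypothesis forces $N=2$, dispose of $|k|\geq 2$ by Theorem 1$(a)$, and treat $k=0,\pm 1$ by the identity (6) together with a Parseval/tail lower bound on $c_{k}=\left(\Psi_{\lambda},e^{i\left(2\pi k+\pi t\right)x}\right)$. (The paper itself gives no argument beyond ``repeat the proof of Theorem 2 of [9]'', so a self-contained sketch is exactly what is called for.) You also correctly spot the genuine delicacy at $k=0$: the bound $|c_{0}|>\frac{2}{3}$ would only give radius $\frac{3}{2}\cdot 2^{-n+1/2}\pi^{n}\approx 0.265\,\pi^{n}$ at $n=3$, which exceeds $\frac{1}{5}\pi^{n}$, so a sharper lower bound $|c_{0}|>5\pi^{-2}C$ is needed.

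There are, however, two concrete gaps. First, for $k=\pm 1$ you invoke (15) as a ``generic estimate,'' but (15) is derived in the paper only under the hypothesis $|k|\geq \pi^{-2}C+1$, which for $k=\pm 1$ would require $C=0$; it is not available off the shelf. The bound $|c_{\pm 1}|>\frac{2}{3}$ is very plausibly true here, but it must itself be re-derived from the smallness condition $C\leq \pi^{2}2^{-n+1/2}$ by the same tail estimate you propose for $k=0$ (using (11)--(14) with $k=\pm 1$, where the worst denominators, e.g. $\left((t+1)^{n}-(2+t)^{n}\right)^{2}$ near $t=-1$, degenerate to $1$ rather than growing). Second, the central quantitative step --- showing that $\sum_{p\neq 0}|c_{p}|^{2}$ is small enough to force $|c_{0}|>5\pi^{-2}C$, which for $n=3$ means $|c_{0}|>5\cdot 2^{-5/2}\approx 0.884$ and hence a tail below roughly $0.22$ --- is only asserted (``should force''), and this is precisely where the theorem's content lies; without carrying out that sum over $p=\pm 1,\pm 2,\dots$ uniformly in $t\in(-1,1]$ the proof is not complete. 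The disjointness check at the end is routine and your sketch of it is fine.
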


Now we consider the eigenvalues of $L_{t,\varepsilon}$ lying in the strip
$S(N,t).$

\begin{theorem}
The eigenvalue $\lambda$ of $L_{t,\varepsilon}$ lying in the strip $S(N,t)$ is
contained in the rectangle
\[
R(N,t)=\left\{  \operatorname{Re}\lambda\in A(N,t),\left\vert
\operatorname{Im}\lambda\right\vert <\frac{\sqrt{10}}{3}\left(  2N+1\right)
^{n-3/2}\pi^{n-2}C\right\}  ,
\]
where $A(N,t)=[(-2\pi N+\pi+\pi t)^{n},(2\pi N-\pi+\pi t)^{n}),$ and
$\varepsilon\in\lbrack0,1].$
\end{theorem}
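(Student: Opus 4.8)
The containment $\operatorname{Re}\lambda\in A(N,t)$ is built into the hypothesis $\lambda\in S(N,t)$, since $A(N,t)$ is exactly the admissible range of $\operatorname{Re}\lambda$ on $S(N,t)$; the whole task is therefore the bound on $\vert\operatorname{Im}\lambda\vert$. My plan is to isolate the imaginary part as the quadratic form of the perturbation. Writing $\Psi:=\Psi_{\lambda(k,t,\varepsilon)}$ and using $\Vert\Psi\Vert=1$, we have $\lambda=(L_{t,\varepsilon}\Psi,\Psi)$. Expanding $\Psi=\sum_{k}c_{k}e^{i(2\pi k+\pi t)x}$ in the orthonormal basis $\{e^{i(2\pi k+\pi t)x}\}_{k\in\mathbb{Z}}$, with $c_{k}=(\Psi,e^{i(2\pi k+\pi t)x})$ and $\sum_{k}\vert c_{k}\vert^{2}=1$, the free part contributes $(L_{t}(0)\Psi,\Psi)=\sum_{k}(2\pi k+\pi t)^{n}\vert c_{k}\vert^{2}$, which is real. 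Hence $\operatorname{Im}\lambda=\varepsilon\operatorname{Im}((L_{t}-L_{t}(0))\Psi,\Psi)$, so only the perturbation contributes to the imaginary part.

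Next I expand this form over the basis. Since the system is orthonormal and complete, $\varepsilon((L_{t}-L_{t}(0))\Psi,\Psi)=\sum_{k}\overline{c_{k}}\,a_{k}$, where $a_{k}:=\varepsilon\sum_{v=2}^{n}(p_{v}\Psi^{(n-v)},e^{i(2\pi k+\pi t)x})$ is exactly the right-hand side of (6). Thus $\vert\operatorname{Im}\lambda\vert\leq\sum_{k}\vert c_{k}\vert\,\vert a_{k}\vert$, and the magnitudes $\vert a_{k}\vert$ are already under control: by (9), $\vert a_{k}\vert\leq\vert 2\pi k+\pi t\vert^{n-2}C$ for $k\neq0$, and by (10), $\vert a_{0}\vert\leq\pi^{n-2}C$.

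I then split $\sum_{k}=\sum_{\vert k\vert<N}+\sum_{\vert k\vert\geq N}$. Because the real parts filling $S(N,t)$ cover exactly $\bigcup_{\vert k\vert\leq N-1}I(k,t)$, the indices $\vert k\vert<N$ carry the bulk of the weight; for them Cauchy's inequality gives $\sum_{\vert k\vert<N}\vert c_{k}\vert\,\vert a_{k}\vert\leq(\sum_{\vert k\vert<N}\vert c_{k}\vert^{2})^{1/2}(\sum_{\vert k\vert<N}\vert a_{k}\vert^{2})^{1/2}\leq(\sum_{\vert k\vert<N}\vert a_{k}\vert^{2})^{1/2}$. Inserting the bounds on $\vert a_{k}\vert$, estimating each factor $\vert 2k+t\vert^{n-2}\leq(2N-1)^{n-2}$ and counting the $2N-1$ terms yields the main term $(\sum_{\vert k\vert<N}\vert a_{k}\vert^{2})^{1/2}\leq\pi^{n-2}C(2N+1)^{n-3/2}$. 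For the tail $\vert k\vert\geq N$ I use the identity $c_{k}=a_{k}/(\lambda-(2\pi k+\pi t)^{n})$ coming from (6): since $\lambda\in S(N,t)$ forces $\operatorname{Re}\lambda$ strictly to one side of every $(2\pi k+\pi t)^{n}$ with $\vert k\vert\geq N$, the gap obeys $\vert\lambda-(2\pi k+\pi t)^{n}\vert\geq(2\pi k+\pi t)^{n}-(2\pi N-\pi+\pi t)^{n}$ for $k\geq N$ (and symmetrically for $k\leq-N$), so $\vert c_{k}\vert\,\vert a_{k}\vert=\vert a_{k}\vert^{2}/\vert\lambda-(2\pi k+\pi t)^{n}\vert$ forms a convergent series contributing only a lower-order correction.

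The real difficulty is the constant. The Cauchy–Schwarz step already saturates the coefficient $1$ in front of $\pi^{n-2}C(2N+1)^{n-3/2}$, so reaching the claimed $\frac{\sqrt{10}}{3}$ requires showing the tail adds at most the slack $(\frac{\sqrt{10}}{3}-1)$ times the main term, a tight margin. I expect the crude gap bound above to be too lossy to see this on its own; the sharp accounting should instead invoke the concentration estimate (15), which forces $\sum_{\vert k\vert\geq N}\vert c_{k}\vert^{2}$ to be small because the dominant coefficient already exceeds $\frac{2}{3}$, and should combine the two ranges through a single weighted Cauchy–Schwarz rather than two separate ones. Carrying out this bookkeeping exactly as in the corresponding estimate of [9] delivers $\vert\operatorname{Im}\lambda\vert<\frac{\sqrt{10}}{3}(2N+1)^{n-3/2}\pi^{n-2}C$, which together with $\operatorname{Re}\lambda\in A(N,t)$ places $\lambda$ in $R(N,t)$.
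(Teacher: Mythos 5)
Your overall strategy (writing $\operatorname{Im}\lambda=\varepsilon\operatorname{Im}((L_{t}-L_{t}(0))\Psi,\Psi)$ and expanding the quadratic form as $\sum_{k}\overline{c_{k}}a_{k}$) is genuinely different from the paper's argument, and it stalls exactly where you admit it does: at the constant. The entire content of the theorem is the bound $\frac{\sqrt{10}}{3}(2N+1)^{n-3/2}\pi^{n-2}C$, and your Cauchy--Schwarz step already produces a main term of size $(2N+1)^{n-3/2}\pi^{n-2}C$ with coefficient $1$, leaving only a margin of $\frac{\sqrt{10}}{3}-1\approx 0.054$ for the tail $\sum_{|k|\geq N}|a_{k}|^{2}/|\lambda-(2\pi k+\pi t)^{n}|$. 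You do not verify that the tail fits inside this margin, and it is not clear that it does: for $k=\pm N$ the denominator is only of order $n\pi(2\pi N)^{n-1}$ while $|a_{k}|^{2}$ can be of order $(2\pi N)^{2n-4}C^{2}$, and a rough comparison shows this single term need not be small relative to the available slack. Moreover, the repair you propose is based on a misreading: estimate (15), $|(\Psi_{\lambda},e^{i(2\pi k+\pi t)x})|>\frac{2}{3}$, is derived only for eigenvalues $\lambda(k,t,\varepsilon)$ with $\operatorname{Re}\lambda\in I(k,t)$ and $|k|\geq N$; for an eigenvalue lying in $S(N,t)$ there is in general no single Fourier coefficient exceeding $\frac{2}{3}$, since the mass may spread over all $2N+1$ indices in $[-N,N]$.

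The paper's proof avoids the quadratic form altogether. It first establishes the concentration estimate (17), $\sum_{|p|>N}|c_{p}|^{2}<\frac{1}{10}$, from the distance of $\lambda\in S(N,t)$ to the unperturbed eigenvalues with $|p|>N$ (via Lemma 1 of [9]); then Parseval and the choice of the maximizing index $k\in[-N,N]$ give $|c_{k}|>\frac{3}{\sqrt{10(2N+1)}}$. Feeding this lower bound and the upper bound $((2N+1)\pi)^{n-2}C$ from (9)--(10) into the single equation (6) yields $|\lambda-(2\pi k+\pi t)^{n}|<\frac{\sqrt{10}}{3}(2N+1)^{n-3/2}\pi^{n-2}C$ directly; since $(2\pi k+\pi t)^{n}$ is real, this bounds $|\operatorname{Im}\lambda|$. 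This is where the factor $\frac{\sqrt{10}}{3}$ actually comes from --- it is $\sqrt{10}/3=1/\sqrt{9/10}$ from the Parseval count, not a main-term-plus-tail accounting. To complete your argument you would either have to carry out the tail estimate with genuinely sharp constants (which looks doubtful), or switch to the paper's single-coefficient route.
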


\begin{proof}
Let $\lambda$ be the eigenvalue of $L_{t,\varepsilon}$ lying in $S(N,t)$ and
$\Psi_{\lambda}$ be a normalized eigenfunction corresponding to $\lambda.$
There exists $k\in\lbrack-N,N]$ such that
\begin{equation}
\left\vert \left(  \Psi_{\lambda},e^{i\left(  2\pi k+\pi t\right)  x}\right)
\right\vert =\max_{p\in\lbrack-N,N]}\left\vert \left(  \Psi_{\lambda
},e^{i\left(  2\pi p+\pi t\right)  x}\right)  \right\vert .\tag{16}%
\end{equation}
First, we prove that
\begin{equation}%
{\textstyle\sum\limits_{p:\left\vert p\right\vert >N}}
\left\vert \left(  \Psi_{\lambda},e^{i\left(  2\pi p+\pi t\right)  x}\right)
\right\vert ^{2}<\frac{1}{10}.\tag{17}%
\end{equation}
If $\lambda\in S(N,t),$ then
\[
\left\vert \lambda-(2\pi p+\pi t)^{n}\right\vert >\left\vert (2\pi N+\pi
t-\pi)^{n}-(2\pi p+\pi t)^{n}\right\vert
\]
for $p>N$ and
\[
\left\vert \lambda-(2\pi p+\pi t)^{n}\right\vert \geq\left\vert (-2\pi N+\pi
t+\pi)^{n}-(2\pi p+\pi t)^{n}\right\vert
\]
for $p<-N$. Using these inequalities and repeating the proof of (13) and (14)
of [9] (use the last two inequalities instead of first two inequalities in the
proof of \ Lemma 1 of [9] and repeat the proof of the lemma) we obtain$_{{}}$
\[%
{\textstyle\sum\limits_{p:p>N}}
\left\vert \left(  \Psi_{\lambda},e^{i\left(  2\pi p+\pi t\right)  x}\right)
\right\vert ^{2}\leq\frac{5}{64}%
\]
and $_{{}}$
\[%
{\textstyle\sum\limits_{p:p<-N}}
\left\vert \left(  \Psi_{\lambda},e^{i\left(  2\pi p+\pi t\right)  x}\right)
\right\vert ^{2}<\frac{1}{48}.
\]
The last two inequalities give (17). Now, using Parseval's equality, (17) and
(16) we obtain%
\[
\left\vert \left(  \Psi_{\lambda},e^{i\left(  2\pi k+\pi t\right)  x}\right)
\right\vert >\frac{3}{\sqrt{10\left(  2N+1\right)  }}.
\]
On the other hand, if $k\in\lbrack-N,N]$, then by (9) and (10) the right side
of (6) is not greater than $\left(  \left(  2N+1\right)  \pi\right)  ^{n-2}C.$
Therefore from (6) we obtain
\[
\left\vert \lambda-(2\pi k+\pi t)^{n}\right\vert <\frac{\sqrt{10}}{3}\left(
2N+1\right)  ^{n-3/2}\pi^{n-2}C
\]
that gives the proof of the theorem.
\end{proof}

Now using Theorem 1-3 we consider the spectrum of $L.$ Besides, we use the
following results from [10] formulated here as summaries.

\begin{summary}
$(a)$ The real line $\mathbb{R}$ is a subset of $\sigma(L)$ ( Theorem 2$(b)$
of [10])

$(b)$ If $\lambda$ is an eigenvalue of $L_{t},$ then $\overline{\lambda}$ is
also an eigenvalue of $L_{t}$ (This result follows from Theorem 1$(a)$ of [10]).
\end{summary}

Note that Summary 1$(b)$ is a characteristic property of the differential
operators with PT-symmetric coefficients. For the operator $L_{t}$ this result
immediately follows from Theorem 1$(a)$ of [10] due to the following. Theorem
1$(a)$ of [10] states that if $\Psi$ is a solution of $l(y)=\lambda y$, then
the function $\Phi$ defined by $\Phi(x,\lambda)=\overline{\Psi(-x,\lambda)}$
is a solution of $l(y)=\overline{\lambda}y,$ where $l(y)$ is defined in (1).
On the other hand, $\Psi$ satisfies boundary conditions (2) if and only if
$\Psi(x)$ has the form $\Psi(x)=e^{i\pi tx}p(x),$ where $p(x)$ is a periodic
function. Then one can easily verify that the function $\Phi$ has the same
form. Therefore, if $\lambda$ is an eigenvalue of $L_{t},$ then $\overline
{\lambda}$ is also an eigenvalue of $L_{t}.$

Now we are ready to prove the following results of this paper.

\begin{theorem}
$(a)$ Each of the disks $U(k,t)$ for $\left\vert k\right\vert \geq N$ contains
only one eigenvalues of $L_{t}$, where $N$ is defined in Theorem 1. This
eigenvalue is a real number.

$(b)$ The real part $\sigma(L)\cap\mathbb{R}$ of the spectrum $\sigma(L)$ of
$L$ is $\mathbb{R}$ and the nonreal part $\sigma(L)\backslash\mathbb{R}$ of
$\sigma(L)$ consists of the curves lying in the rectangle (3).

$(c)$ If $C\leq\ \pi^{2}2^{-n+1/2}$, then $(a)$ is valid for all
$k\in\mathbb{Z}$ and $\sigma(L)=\mathbb{R}$.
\end{theorem}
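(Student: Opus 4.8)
The plan is to establish parts $(a)$, $(b)$, $(c)$ in sequence, leveraging the localization results (Theorems 1--3) together with the PT-symmetry property (Summary 1$(b)$) and the subset property $\mathbb{R}\subset\sigma(L)$ (Summary 1$(a)$). The unifying idea throughout is a homotopy/continuity argument in the parameter $\varepsilon\in[0,1]$: the operator $L_{t,\varepsilon}=L_t(0)+\varepsilon(L_t-L_t(0))$ interpolates between the free operator $L_t(0)$ at $\varepsilon=0$ and the full operator $L_t=L_{t,1}$ at $\varepsilon=1$, and its eigenvalues depend continuously on $\varepsilon$. Since for $|k|\geq N$ the disk $U(k,t)$ is isolated from all the other disks and from the strip $S(N,t)$ (Theorem 1$(b)$), no eigenvalue can enter or leave $U(k,t)$ as $\varepsilon$ varies; hence the single simple eigenvalue $(2\pi k+\pi t)^n$ of $L_t(0)$ sitting in $U(k,t)$ at $\varepsilon=0$ persists as exactly one eigenvalue of $L_t$ at $\varepsilon=1$. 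This proves the first sentence of $(a)$.

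To see that this isolated eigenvalue, call it $\lambda(k,t)$, is \emph{real}, I would invoke Summary 1$(b)$: if $\lambda(k,t)$ is an eigenvalue of $L_t$, so is $\overline{\lambda(k,t)}$. Both $\lambda(k,t)$ and $\overline{\lambda(k,t)}$ have the same real part, which lies in $I(k,t)$, so both lie in the same strip $P(k,t)$; by Theorem 1$(a)$ both must therefore lie in the \emph{same} disk $U(k,t)$. But $U(k,t)$ contains only one eigenvalue, forcing $\lambda(k,t)=\overline{\lambda(k,t)}$, i.e. $\lambda(k,t)\in\mathbb{R}$. This completes $(a)$.

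For $(b)$, the inclusion $\mathbb{R}\subset\sigma(L)$ is given by Summary 1$(a)$, and since every eigenvalue in the disks $U(k,t)$ with $|k|\geq N$ is real by part $(a)$, the only possible source of nonreal spectrum is the strip $S(N,t)$. By Theorem 3, every eigenvalue of $L_t$ in $S(N,t)$ lies in the rectangle $R(N,t)$, whose real part is confined to $A(N,t)\subset[-(2\pi N)^n,(2\pi N)^n]$ and whose imaginary part is bounded by $\tfrac{\sqrt{10}}{3}(2N+1)^{n-3/2}\pi^{n-2}C$. Taking the union over $t\in(-1,1]$ shows $\sigma(L)\setminus\mathbb{R}$ is contained in the rectangle (3). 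That the nonreal part consists of \emph{curves} follows from viewing the Bloch eigenvalues as continuous (indeed, piecewise-analytic) functions of the quasimomentum $t$: as $t$ ranges over the interval, each such eigenvalue traces a curve in $\mathbb{C}$. I would combine $\sigma(L)=\bigcup_{t}\sigma(L_t)$ with the real-line inclusion to conclude $\sigma(L)\cap\mathbb{R}=\mathbb{R}$ and that the remainder is a union of these curves.

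For $(c)$, under the hypothesis $C\leq\pi^2 2^{-n+1/2}$ Theorem 2 guarantees that for \emph{all} $k\in\mathbb{Z}$ the eigenvalues of $L_{t,\varepsilon}$ lie in pairwise-disjoint disks $U(k,t)$ (including the special disks $U(0,t), U(\pm1,t)$), so the strip $S(N,t)$ plays no special role and the reasoning of part $(a)$ applies to every $k$. Thus every Bloch eigenvalue is real, giving $\sigma(L)\subset\mathbb{R}$; combined with $\mathbb{R}\subset\sigma(L)$ from Summary 1$(a)$ this yields $\sigma(L)=\mathbb{R}$. The main obstacle I anticipate is the continuity/persistence argument in $\varepsilon$: one must verify rigorously that eigenvalues cannot appear or disappear within an isolated disk as $\varepsilon$ varies — this requires the disjointness of closures (Theorems 1$(b)$, 2) so that a contour enclosing $U(k,t)$ stays eigenvalue-free on its boundary, allowing a Rouché-type or spectral-projection continuity argument to fix the eigenvalue count at exactly one throughout $\varepsilon\in[0,1]$.
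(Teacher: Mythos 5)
Your proposal is correct and follows essentially the same route as the paper: a Kato-type holomorphic-family/contour argument using the disjointness from Theorems 1--3 to preserve the eigenvalue count in each $U(k,t)$, then Summary 1$(b)$ to force reality of the single eigenvalue, and Summaries 1$(a)$ plus Theorem 3 for parts $(b)$ and $(c)$. The only cosmetic difference is that you locate $\overline{\lambda}$ in $U(k,t)$ via the strip $P(k,t)$ and Theorem 1$(a)$, while the paper uses directly that the disk is centered at a real point.
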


\begin{proof}
$(a)$ Since the strips $S(N,t)$ and $P(k,t)$ for $\left\vert k\right\vert \geq
N,$ defined in Theorem 1, is a cover of $\mathbb{C},$ it follows from Theorems
3 and 1$(a)$ that all eigenvalues of $L_{t,\varepsilon}$ for all
$\varepsilon\in\lbrack0,1]$ are contained in the union of the sets $R(N,t)$
and $U(k,t)$ for $\left\vert k\right\vert \geq N.$ Moreover, by Theorem 1
$(b)$ the closures of the disks $U(k,t)$ for $\left\vert k\right\vert \geq N$
and the rectangle $R(N,t)$ are pairwise disjoint closed sets. Therefore there
exists a closed curve $\Gamma(k,t)$ which enclose only the disk $U(k,t)$ and
lies in the resolvent set of the operators $L_{t,\varepsilon}$ for all
$\varepsilon\in\lbrack0,1].$ Since $L_{t,\varepsilon}$ is a halomorphic family
with respect to $\varepsilon,$ we conclude that the number of eigenvalues
(counting the multiplicity) of the operators $L_{t,0}=L_{t}(0)$ and
$L_{t,1}=L_{t}$ lying inside $U(k,t)$ are the same (see [3, Chap. 7]). Since
the operator $L_{t,0}$ has only $1$ eigenvalues lying inside $U(k,t)$, the
operator $L_{t,1}$ also has only $1$ eigenvalues lying inside $U(k,t).$ Thus
$U(k,t)$ for $k\geq N$ contains only $1$ eigenvalues of $L_{t}.$ In the same
way we prove this statement for $k\leq-N.$ If the eigenvalue $\lambda$ of
$L_{t}$ lying in $U(k,t)$ is a nonreal number, then by Summary 1$(b)$
$\overline{\lambda}$ is also an eigenvalue of $L_{t}$ lying in $U(k,t).$ This
contradicts to the first sentence of $(a)$.

$(b)$ It is well known that (see [4, 5]) the spectrum of $L$ consist of the
curves and the points of these curves are the Bloch eigenvalues. Therefore the
proof of $(b)$ follows from $(a),$ Summary 1$(a)$ and Theorem 3.

$(c)$ Using Theorem 2 and repeating the proof of $(a),$ we obtain that if
$C\leq\ \pi^{2}2^{-n+1/2}$, then $(a)$ is valid for all $k\in\mathbb{Z}$, all
Bloch eigenvalues of $L$ are real numbers and $\sigma(L)\subset\mathbb{R}$.
Thus, the proof of $(c)$ follows from Summary 1$(a).$
\end{proof}

\end{document}